%
\input ./style/arxiv-vmsta.cfg
\documentclass[numbers,compress,v1.0.1]{vmsta}
\usepackage{vtexbibtags}
\usepackage{authorquery}

\volume{5}
\issue{4}
\pubyear{2018}
\firstpage{471}
\lastpage{481}
\aid{VMSTA117}
\doi{10.15559/18-VMSTA117}
\articletype{research-article}



\startlocaldefs

\newcommand{\rrvert}{\vert}
\newcommand{\llvert}{\vert}
\urlstyle{rm}
\allowdisplaybreaks

\newtheorem{thm}{Theorem}
\newtheorem{remark}{Remark}
\newtheorem{lemma}{Lemma}

\newtheorem{proposition}{Proposition}

\theoremstyle{definition}
\newtheorem{defin}{Definition}
\endlocaldefs

\begin{aqf}
\querytext{Q1}{Note: 'continuous in process' is an incomplete phrase. Please complete.}
\end{aqf}
\begin{document}

\begin{frontmatter}
\pretitle{Research Article}

\title{On generalized stochastic fractional integrals and related inequalities}

\author{\inits{H.}\fnms{H\"{u}seyin}~\snm{Budak}\thanksref{cor1}\ead[label=e1]{hsyn.budak@gmail.com}}
\thankstext[type=corresp,id=cor1]{Corresponding author.}
\author{\inits{M.Z.}\fnms{Mehmet Zeki}~\snm{Sarikaya}\ead[label=e2]{sarikayamz@gmail.com}}

\address{Department of Mathematics, Faculty of Science and Arts,
\institution{D\"{u}zce University}, Konuralp Campus, D\"{u}zce-\cny{Turkey}}

\markboth{H. Budak, M.Z. Sarikaya}{On generalized stochastic
fractional integrals and related inequalities}




\begin{abstract}
The generalized mean-square fractional
integrals $\mathcal{J}_{\rho,\lambda,u+;\omega}^{\sigma}$ and $\mathcal{J}_{\rho,\lambda,v-;\omega}^{\sigma}$ of the stochastic process $X$ are introduced.
Then, for Jensen-convex and strongly convex stochastic proceses, the generalized fractional Hermite--Hadamard inequality is establish
via generalized stochastic fractional integrals.
\end{abstract}
\begin{keywords}
\kwd{Hermite--Hadamard inequality}
\kwd{stochastic fractional integrals}
\kwd{convex stochastic process}
\end{keywords}
\begin{keywords}[MSC2010]%
\kwd{26D15}
\kwd{26A51}
\kwd{60G99}
\end{keywords}

\received{\sday{29} \smonth{5} \syear{2018}}
\revised{\sday{13} \smonth{9} \syear{2018}}
\accepted{\sday{13} \smonth{9} \syear{2018}}
\publishedonline{\sday{24} \smonth{9} \syear{2018}}
\end{frontmatter}

\section{Introduction}

In 1980, Nikodem \cite{Nikodem} introduced convex stochastic processes and
investigated their regularity properties. In 1992, Skwronski \cite%
{Skowronski} obtained some further results on convex stochastic processes.

Let $ ( \varOmega,\mathcal{A},P ) $ be an arbitrary probability
space. A function $X:\varOmega\rightarrow\mathbb{R}$ is called a random
variable if it is $\mathcal{A}$-measurable. A function $X:I\times\varOmega
\rightarrow\mathbb{R}$, where $I\subset\mathbb{R}$ is an interval, is
called a stochastic process if for every $t\in I$ the function $X (
t,. ) $ is a random variable.

Recall that the stochastic process $X:I\times\varOmega\rightarrow\mathbb{R}$
is called

(\textit{i}) continuous in probability in interval $I$, if for
all $%
t_{0}\in I$ we have
\begin{equation*}
P{\textrm{-}}\lim_{t\rightarrow t_{0}}X ( t,. ) =X ( t_{0},. ) ,
\end{equation*}
where $P{\textrm{-}}\lim$ denotes the limit in probability.

(\textit{ii}) \textit{mean-square continuous} in the interval
$I$, if
for all $t_{0}\in I$
\begin{equation*}
\lim_{t\rightarrow t_{0}}E \bigl[ \bigl( X ( t ) -X ( t_{0} )
\bigr) ^{2} \bigr] =0,
\end{equation*}
where $E [ X ( t )  ] $ denotes the expectation value of
the random variable $X ( t,. ) $.

Obviously, \textit{mean-square }continuity implies continuity in
probability, but the converse implication is not true.

\begin{defin}
Suppose we are given a sequence $ \{ \Delta^{m} \} $ of
partitions, $\Delta^{m}= \{ a_{m,0},\ldots,\break a_{m,n_{m}} \} $.
We say
that the sequence $ \{ \Delta^{m} \} $ is a normal sequence of
partitions if the length of the greatest interval in the $n$-th partition
tends to zero, i.e.,
\begin{equation*}
\lim\limits
_{m\rightarrow\infty}\sup_{1\leq i\leq n_{m}} \llvert a_{m,i}-a_{m,i-1}
\rrvert =0.
\end{equation*}
\end{defin}

Now we would like to recall the concept of the mean-square integral.
For the
definition and basic properties see \cite{Sobczyk}.

Let $X:I\times\varOmega\rightarrow\mathbb{R}$ be a stochastic process
with $E [ X ( t ) ^{2} ] <\infty$ for all $t\in I$. Let
$ [a,b ] \subset I$, $a=t_{0}<t_{1}<t_{2}<\cdots<t_{n}=b$ be a partition
of $ [ a,b ] $ and $\varTheta_{k}\in [ t_{k-1},t_{k} ] $ for
all $k=1,\ldots,n$. A random variable $Y:\varOmega\rightarrow\mathbb{R}$ is
called the mean-square integral of the process $X$ on $ [ a,b] $,
if we have
\begin{equation*}
\lim_{n\rightarrow\infty}E \Biggl[ \Biggl( \sum_{k=1}^{n}X
( \varTheta _{k} ) ( t_{k}-t_{k-1} ) -Y \Biggr)
^{2} \Biggr] =0
\end{equation*}
for all normal sequences of partitions of the interval $ [ a,b
] $
and for all $\varTheta_{k}\in [ t_{k-1},t_{k} ] $, $k=1,\ldots,n$.
Then, we write
\begin{equation*}
Y ( \cdot ) =\int\limits
_{a}^{b}X ( s,\cdot ) ds\text{ (a.e.).}
\end{equation*}
For the existence of the mean-square integral it is enough to assume the
mean-square continuity of the stochastic process $X$.

Throughout the paper we will frequently use the monotonicity of the
mean-square integral. If $X ( t,\cdot ) \leq Y ( t,\cdot
 ) $ (a.e.) in some interval $ [ a,b ] $, then
\begin{equation*}
\int\limits
_{a}^{b}X ( t,\cdot ) dt\leq\int \limits
_{a}^{b}Y
( t,\cdot ) dt\text{ (a.e.).}
\end{equation*}
Of course, this inequality is an immediate consequence of the
definition of
the mean-square integral.

\begin{defin}
We say that a stochastic processes $X:I\times\varOmega\rightarrow\mathbb{R}$
is convex, if for all $\lambda\in [ 0,1 ] $ and $u,v\in I$ the
inequality
\begin{equation}
X \bigl( \lambda u+ ( 1-\lambda ) v,\cdot \bigr) \leq\lambda X ( u,\cdot ) + (
1-\lambda ) X ( v,\cdot ) \text{ \ \ (a.e.)} \label{H1}
\end{equation}
is satisfied. If the above inequality is assumed only for $\lambda
=\frac{1}{%
2}$, then the process $X$ is Jensen-convex or $\frac{1}{2}$-convex. A
stochastic process $X$ is concave if $ ( -X ) $ is convex. Some
interesting properties of convex and Jensen-convex processes are presented
in \cite{Nikodem,Sobczyk}.
\end{defin}

Now, we present some results proved by Kotrys \cite{Kotrys} about
Hermite--Hadamard inequality for convex stochastic processes.

\begin{lemma}
\label{LL} If $X:I\times\varOmega\rightarrow\mathbb{R}$ is a stochastic
process of the form $X ( t,\cdot ) =A ( \cdot )
t+B ( \cdot ) $, where $A,B:\varOmega\rightarrow\mathbb{R}$ are
random variables, such that $E [ A^{2} ] <\infty,E [ B^{2}
 ] <\infty$ and $ [ a,b ] \subset I$, then
\begin{equation*}
\int\limits
_{a}^{b}X ( t,\cdot ) dt=A ( \cdot )
\frac{%
b^{2}-a^{2}}{2}+B ( \cdot ) ( b-a ) \text{ (a.e.).}
\end{equation*}
\end{lemma}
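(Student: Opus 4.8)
The plan is to invoke the definition of the mean-square integral directly, taking advantage of the fact that $X$ is affine in $t$. Fix an arbitrary normal sequence of partitions $\{\Delta^m\}$ of $[a,b]$, write $\Delta^m=\{a=t_0<t_1<\cdots<t_{n_m}=b\}$, and pick arbitrary intermediate points $\Theta_k\in[t_{k-1},t_k]$, $k=1,\dots,n_m$. Substituting $X(\Theta_k,\cdot)=A(\cdot)\Theta_k+B(\cdot)$ into the approximating sum and using linearity gives
\[
\sum_{k=1}^{n_m}X(\Theta_k,\cdot)(t_k-t_{k-1})=A(\cdot)\sum_{k=1}^{n_m}\Theta_k(t_k-t_{k-1})+B(\cdot)\sum_{k=1}^{n_m}(t_k-t_{k-1}).
\]

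First I would observe that the second sum telescopes to $b-a$ for every $m$. For the first sum, note that $\sum_{k=1}^{n_m}\Theta_k(t_k-t_{k-1})$ is exactly a Riemann sum of the deterministic continuous function $t\mapsto t$ on $[a,b]$; because that function is Riemann integrable, these sums converge to $\int_a^b t\,dt=\frac{b^2-a^2}{2}$ along any normal sequence of partitions, independently of the choice of the tags $\Theta_k$. Write $\delta_m:=\sum_{k=1}^{n_m}\Theta_k(t_k-t_{k-1})-\frac{b^2-a^2}{2}$, so that $\delta_m$ is a deterministic quantity with $\delta_m\to 0$ as $m\to\infty$.

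Setting $Y(\cdot):=A(\cdot)\frac{b^2-a^2}{2}+B(\cdot)(b-a)$, the $B$-terms cancel and the error between the approximating sum and $Y$ reduces to $\delta_m A(\cdot)$. Hence
\[
E\Biggl[\Biggl(\sum_{k=1}^{n_m}X(\Theta_k,\cdot)(t_k-t_{k-1})-Y(\cdot)\Biggr)^2\Biggr]=\delta_m^2\,E\bigl[A^2\bigr]\longrightarrow 0,
\]
since $\delta_m\to 0$ and $E[A^2]<\infty$ by hypothesis. Because the normal sequence of partitions and the intermediate points were arbitrary, this says precisely that $Y$ is the mean-square integral of $X$ on $[a,b]$, which is the claimed formula.

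The argument is essentially a one-line computation once the affine structure is exploited, so I do not expect a genuine obstacle. The only step deserving comment is the convergence $\delta_m\to 0$, i.e.\ that Riemann sums of a continuous function along a normal sequence of partitions converge to the integral uniformly in the choice of intermediate points; this is classical real analysis. Note also that $E[B^2]<\infty$ plays no role in the estimate beyond guaranteeing that $Y$ is itself a square-integrable random variable.
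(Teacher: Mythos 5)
Your proof is correct: it is the standard direct verification from the definition of the mean-square integral, using that the $B$-terms telescope exactly and that the $A$-coefficient is a deterministic Riemann sum of $t\mapsto t$ converging along any normal sequence of partitions. The paper itself states this lemma without proof (it is quoted from Kotrys), and your argument is essentially the one given in that source, so there is nothing to add.
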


\begin{proposition}
\label{p} Let $X:I\times\varOmega\rightarrow\mathbb{R}$ be a convex
stochastic process and $t_{0}\in intI$. Then there exists a random
variable $%
A:\varOmega\rightarrow\mathbb{R}$ such that $X$ is supported at $t_{0}$ by
the process $A ( \cdot )  ( t-t_{0} ) +X (
t_{0},\cdot ) $. That is
\begin{equation*}
X ( t,\cdot ) \geq A ( \cdot ) ( t-t_{0} ) +X ( t_{0},
\cdot ) \text{ (a.e.).}
\end{equation*}
for all $t\in I$.
\end{proposition}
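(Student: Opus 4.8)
The plan is to carry the classical argument—that a convex function has a supporting line at every interior point—over to the stochastic setting, the only genuinely new issue being that the convexity relation \eqref{H1} holds merely almost everywhere, with an exceptional null set depending on the arguments of the process, so some care is needed to produce a \emph{single} random variable $A$ that works for every $t$. As a first ingredient I would record the usual three-slope inequalities: for $p<q<r$ in $I$, writing $q=\lambda p+(1-\lambda)r$ with $\lambda=\frac{r-q}{r-p}\in(0,1)$ and $1-\lambda=\frac{q-p}{r-p}$ in \eqref{H1} and multiplying through by $r-p>0$ gives $(r-p)X(q,\cdot)\le(r-q)X(p,\cdot)+(q-p)X(r,\cdot)$ (a.e.), which rearranges to
\[
\frac{X(q,\cdot)-X(p,\cdot)}{q-p}\le\frac{X(r,\cdot)-X(p,\cdot)}{r-p}\le\frac{X(r,\cdot)-X(q,\cdot)}{r-q}\quad\text{(a.e.)},
\]
the exceptional null set depending on $p,q,r$.

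Next, using $t_0\in\mathrm{int}\,I$, I would fix the countable nonempty sets $Q^-=\{s\in I\cap\mathbb{Q}:s<t_0\}$ and $Q^+=\{r\in I\cap\mathbb{Q}:r>t_0\}$, apply the previous step to every triple $s<t_0<r$ with $s\in Q^-$ and $r\in Q^+$, and let $N$ be the union of the (countably many) corresponding exceptional sets, so that $P(N)=0$. For $\omega\notin N$ I define
\[
A(\omega)=\sup_{s\in Q^-}\frac{X(t_0,\omega)-X(s,\omega)}{t_0-s},
\]
and put $A\equiv0$ on $N$. On $\Omega\setminus N$ each quotient occurring in this supremum is at most $\frac{X(r_0,\omega)-X(t_0,\omega)}{r_0-t_0}$ for a fixed $r_0\in Q^+$, so $A(\omega)$ is a finite real number; being a countable supremum of random variables, $A$ is $\mathcal{A}$-measurable, hence a random variable.

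It then remains to verify the supporting inequality for an arbitrary $t\in I$. The case $t=t_0$ is an identity. If $t>t_0$, the three-slope inequality applied to each triple $s<t_0<t$, $s\in Q^-$, gives $\frac{X(t_0,\cdot)-X(s,\cdot)}{t_0-s}\le\frac{X(t,\cdot)-X(t_0,\cdot)}{t-t_0}$ a.e.; intersecting these countably many co-null sets with $\Omega\setminus N$ and taking the supremum over $s\in Q^-$ yields $A(\cdot)\le\frac{X(t,\cdot)-X(t_0,\cdot)}{t-t_0}$, i.e.\ $X(t,\cdot)\ge A(\cdot)(t-t_0)+X(t_0,\cdot)$ (a.e.). If $t<t_0$, I choose a rational $s$ with $t<s<t_0$ (possible since $(t,t_0)\subset I$, so $s\in Q^-$); the three-slope inequality for the triple $t<s<t_0$ gives $\frac{X(t_0,\cdot)-X(t,\cdot)}{t_0-t}\le\frac{X(t_0,\cdot)-X(s,\cdot)}{t_0-s}$ a.e., while $\frac{X(t_0,\cdot)-X(s,\cdot)}{t_0-s}\le A(\cdot)$ on $\Omega\setminus N$, so a.e.\ $\frac{X(t_0,\cdot)-X(t,\cdot)}{t_0-t}\le A(\cdot)$, which on multiplying by $t-t_0<0$ is precisely $X(t,\cdot)\ge A(\cdot)(t-t_0)+X(t_0,\cdot)$ (a.e.).

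The step I expect to be the main obstacle is not computational but the measure-theoretic bookkeeping just described: one must ensure $A$ is defined off a \emph{single} null set, which is achieved by building it from the countable set $Q^-$ alone, and then exploit that the conclusion is quantified as ``for every $t$, almost everywhere'', so that for each individual $t$ one may absorb a further $t$-dependent null set when comparing against the (finitely or countably many) rationals used above.
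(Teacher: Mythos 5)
Your proof is correct. Be aware, though, that the paper contains no proof of Proposition \ref{p} to compare against: it is stated as a recalled result of Kotrys \cite{Kotrys}, so you are supplying an argument the authors omit. What you wrote is the classical supporting-line construction transported to the stochastic setting, and you have handled the one genuinely stochastic difficulty --- that each instance of the convexity inequality \eqref{H1} carries its own exceptional null set --- in the right way: restricting the three-slope inequalities to rational triples produces only countably many exceptional sets, the supremum of left-hand difference quotients over the countable set $Q^{-}$ is a measurable and a.e.\ finite random variable $A$ (finiteness coming from domination by a fixed right-hand slope $\frac{X(r_{0},\cdot)-X(t_{0},\cdot)}{r_{0}-t_{0}}$), and the verification for an arbitrary $t$ is allowed to discard a further $t$-dependent null set because the conclusion is quantified as ``for every $t$, almost everywhere''. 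The small prerequisites are all in place: $Q^{-}$ and $Q^{+}$ are nonempty since $t_{0}\in \mathrm{int}\,I$, and for $t<t_{0}$ the interpolating rational lies in $Q^{-}$ because $I$ is an interval, so $(t,t_{0})\subset I$. I see no gap.
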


\begin{thm}
\label{LL1} Let $X:I\times\varOmega\rightarrow\mathbb{R}$ be a Jensen-convex,
mean-square continuous in the interval $I$ stochastic process. Then for
any $%
u,v\in I$ we have
\begin{equation}
X \biggl( \frac{u+v}{2},\cdot \biggr) \leq\frac{1}{v-u}\int \limits
_{u}^{v}X
( t,\cdot ) dt\leq\frac{X ( u,\cdot
 )
+X ( v,\cdot ) }{2}\text{ (a.e.)} \label{0}
\end{equation}
\end{thm}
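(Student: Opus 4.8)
The plan is to establish the two inequalities in \eqref{0} separately. In each case I would reduce a pointwise‑in‑$\omega$ convexity estimate to an inequality between mean‑square integrals, using the monotonicity of the mean‑square integral recalled above, and then evaluate the mean‑square integral of the resulting affine process by Lemma~\ref{LL}. One preliminary remark is needed: a mean‑square continuous Jensen‑convex process on $I$ is in fact convex, i.e.\ it satisfies \eqref{H1}. This is the stochastic analogue of the classical fact that a measurable midpoint‑convex function is convex, and it is what permits the use of Proposition~\ref{p} and of the full inequality \eqref{H1} below.

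For the left‑hand inequality, fix $u,v\in I$ with $u<v$ and set $t_{0}=\frac{u+v}{2}$, which is an interior point of $I$. By Proposition~\ref{p} there is a random variable $A$ with
\begin{equation*}
X(t,\cdot)\geq A(\cdot)\Bigl(t-\tfrac{u+v}{2}\Bigr)+X\Bigl(\tfrac{u+v}{2},\cdot\Bigr)\quad\text{(a.e.)}
\end{equation*}
for every $t\in[u,v]$. The right‑hand side is an affine process of the type treated in Lemma~\ref{LL}, so integrating over $[u,v]$ and invoking monotonicity of the mean‑square integral yields
\begin{equation*}
\int_{u}^{v}X(t,\cdot)\,dt\geq A(\cdot)\,\frac{v^{2}-u^{2}}{2}+\Bigl(X\bigl(\tfrac{u+v}{2},\cdot\bigr)-A(\cdot)\,\tfrac{u+v}{2}\Bigr)(v-u)\quad\text{(a.e.)}.
\end{equation*}
The coefficient of $A(\cdot)$ equals $\frac{v^{2}-u^{2}}{2}-\frac{u+v}{2}(v-u)=0$, so $\int_{u}^{v}X(t,\cdot)\,dt\geq(v-u)\,X\bigl(\frac{u+v}{2},\cdot\bigr)$ (a.e.), which is the first inequality after dividing by $v-u$.

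For the right‑hand inequality, given $t\in[u,v]$ write $t=\lambda_{t}u+(1-\lambda_{t})v$ with $\lambda_{t}=\frac{v-t}{v-u}\in[0,1]$; then \eqref{H1} gives $X(t,\cdot)\leq\frac{v-t}{v-u}X(u,\cdot)+\frac{t-u}{v-u}X(v,\cdot)$ (a.e.). The majorant is again affine in $t$, so Lemma~\ref{LL} together with monotonicity, after the elementary evaluation $\int_{u}^{v}(v-t)\,dt=\int_{u}^{v}(t-u)\,dt=\frac{(v-u)^{2}}{2}$, yields $\int_{u}^{v}X(t,\cdot)\,dt\leq\frac{v-u}{2}\bigl(X(u,\cdot)+X(v,\cdot)\bigr)$ (a.e.); dividing by $v-u$ completes the argument.

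The step I expect to be the main obstacle is the ``a.e.''\ bookkeeping combined with the passage from Jensen‑convexity to convexity: the exceptional null set in each pointwise estimate depends on $t$, so some care is needed to confirm that the hypotheses genuinely permit both the application of Proposition~\ref{p} (which is stated for convex processes) and the use of the monotonicity of the mean‑square integral, whose statement requires the comparison of the two processes to hold for the relevant range of $t$. The remaining computations — the integrals $\int_{u}^{v}\bigl(t-\frac{u+v}{2}\bigr)\,dt$, $\int_{u}^{v}(v-t)\,dt$ and $\int_{u}^{v}(t-u)\,dt$ — are routine via Lemma~\ref{LL}.
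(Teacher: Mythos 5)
Your argument is correct and is essentially the paper's own method: the paper states Theorem~\ref{LL1} as a known result of Kotrys, but its proof of the generalized Theorem~\ref{t1} (which reduces to \eqref{0} when $\lambda=1$, $\sigma(0)=1$, $\omega=0$) uses exactly your strategy --- pass from Jensen-convexity to convexity via continuity in probability and Nikodem's theorem, apply the supporting process of Proposition~\ref{p} at $t_{0}=\frac{u+v}{2}$ for the lower bound, and the convexity inequality \eqref{H1} with the affine majorant for the upper bound, evaluating the resulting integrals as in Lemma~\ref{LL}. No gaps; your closing caution about the $t$-dependent null sets is the same implicit step the paper takes without further comment.
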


In \cite{kotrys2}, Kotrys introduced the concept of strongly convex
stochastic processes and investigated their properties.

\begin{defin}
Let $C:%
\varOmega
\rightarrow\mathbb{R}$ denote a positive random variable. The stochastic
process $X:I\times\varOmega\rightarrow\mathbb{R}$ is called strongly convex
with modulus $C(\cdot)>0$, if for all $\lambda\in
{}[
0,1]$ and $u,v\in I$ the inequality
\begin{equation*}
X\bigl(\lambda u+(1-\lambda)v,\cdot\bigr)\leq\lambda X(u,\cdot)+(1-\lambda )X(v,
\cdot)-C(\cdot)\lambda(1-\lambda) (u-v)^{2}\text{ \ \ \ \ \ \ a.e.}
\end{equation*}
is satisfied. If the above inequality is assumed only for $\lambda
=\frac{1}{%
2}$, then the process $X$ is strongly Jensen-convex with modulus
$C(\cdot)$.
\end{defin}

In \cite{hafiz}, Hafiz gave the following definition of stochastic
mean-square fractional integrals.

\begin{defin}
For the stochastic proces $X:I\times\varOmega\rightarrow\mathbb{R}$, the
concept of stochastic mean-square fractional integrals $I_{u+}^{\alpha}$
and $I_{v+}^{\alpha}$ of $X$ of order $\alpha>0$ is defined by
\begin{equation*}
I_{u+}^{\alpha} [ X ] (t)=\frac{1}{\varGamma(\alpha)}\int \limits
_{u}^{t}(t-s)^{\alpha-1}X(x,s)ds
\text{ \ \ }(a.e.),\text{ \ \ }t>u,
\end{equation*}
and
\begin{equation*}
I_{v-}^{\alpha} [ X ] (t)=\frac{1}{\varGamma(\alpha)}\int \limits
_{t}^{v}(s-t)^{\alpha-1}X(x,s)ds
\text{ \ \ }(a.e.),\text{ \ \ }t<v.
\end{equation*}
\end{defin}

Using this concept of stochastic mean-square fractional integrals $%
I_{a+}^{\alpha}$ and $I_{b+}^{\alpha}$, Agahi and Babakhani proved the
following Hermite--Hadamard type inequality for convex stochastic processes:

\begin{thm}
Let $X:I\times\varOmega\rightarrow\mathbb{R}$ be a Jensen-convex stochastic
process that is mean-square continuous in the interval $I$. Then for
any $%
u,v\in I$, the following Hermite--Hadamard inequality
\begin{equation}
X \biggl( \frac{u+v}{2},\cdot \biggr) \leq\frac{\varGamma(\alpha
+1)}{2 (
v-u ) ^{\alpha}} \bigl[
I_{u+}^{\alpha} [ X ] (v)+I_{v-}^{\alpha} [ X ] (u)
\bigr] \leq\frac{X (
u,\cdot
 ) +X ( v,\cdot ) }{2}\text{ (a.e.)} \label{E3}
\end{equation}
holds, where $\alpha>0$.
\end{thm}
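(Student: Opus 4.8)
The plan is to derive the fractional Hermite–Hadamard inequality \eqref{E3} by mimicking the classical proof of \eqref{0}, but with the weight $(v-s)^{\alpha-1}$ (resp.\ $(s-u)^{\alpha-1}$) inserted into the mean-square integral. The two inequalities are proved independently: the left one from Jensen-convexity applied at the midpoint, the right one from the secant-line bound.

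\medskip

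\emph{Left-hand inequality.} For $s\in[u,v]$ write $s=\lambda u+(1-\lambda)v$ and simultaneously $u+v-s=(1-\lambda)u+\lambda v$, so that $\tfrac12\bigl(X(s,\cdot)+X(u+v-s,\cdot)\bigr)\geq X\bigl(\tfrac{u+v}{2},\cdot\bigr)$ (a.e.) by Jensen-convexity (the hypothesis gives this for the midpoint directly). Multiply by $(v-s)^{\alpha-1}\geq 0$ and integrate in the mean-square sense over $s\in[u,v]$; using monotonicity of the mean-square integral one gets
\begin{equation*}
\tfrac12\biggl[\int\limits_{u}^{v}(v-s)^{\alpha-1}X(s,\cdot)\,ds+\int\limits_{u}^{v}(v-s)^{\alpha-1}X(u+v-s,\cdot)\,ds\biggr]\geq X\biggl(\frac{u+v}{2},\cdot\biggr)\int\limits_{u}^{v}(v-s)^{\alpha-1}\,ds.
\end{equation*}
The substitution $s\mapsto u+v-s$ turns the second integral into $\int_{u}^{v}(s-u)^{\alpha-1}X(s,\cdot)\,ds$, i.e.\ the two integrals combine to $\varGamma(\alpha)\bigl(I_{u+}^{\alpha}[X](v)+I_{v-}^{\alpha}[X](u)\bigr)$, while $\int_{u}^{v}(v-s)^{\alpha-1}\,ds=(v-u)^{\alpha}/\alpha$. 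Rearranging yields the left inequality in \eqref{E3}.

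\medskip

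\emph{Right-hand inequality.} For $s=\lambda u+(1-\lambda)v\in[u,v]$, convexity in the Jensen sense is not by itself enough for the affine bound, but mean-square continuity upgrades $\tfrac12$-convexity to full convexity, so $X(s,\cdot)\leq\lambda X(u,\cdot)+(1-\lambda)X(v,\cdot)$ (a.e.) with $\lambda=(v-s)/(v-u)$. Likewise $X(u+v-s,\cdot)\leq(1-\lambda)X(u,\cdot)+\lambda X(v,\cdot)$. Adding gives $X(s,\cdot)+X(u+v-s,\cdot)\leq X(u,\cdot)+X(v,\cdot)$ (a.e.). Multiply by $(v-s)^{\alpha-1}$, integrate over $[u,v]$, use monotonicity, and recombine the integrals exactly as above; dividing by $(v-u)^{\alpha}/\alpha$ and then by $2$ gives the right inequality. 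Alternatively one may integrate the two affine bounds separately and apply Lemma~\ref{LL} to evaluate $\int_{u}^{v}(v-s)^{\alpha-1}\bigl(\lambda X(u,\cdot)+(1-\lambda)X(v,\cdot)\bigr)ds$ via the Beta function $\int_{u}^{v}(v-s)^{\alpha-1}(s-u)\,ds=(v-u)^{\alpha+1}/(\alpha(\alpha+1))$.

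\medskip

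\textbf{Main obstacle.} The only delicate point is the passage from Jensen-convexity to ordinary convexity needed for the right-hand estimate: this requires the mean-square continuity hypothesis together with the Bernstein–Doetsch type argument underlying Theorem~\ref{LL1}, so I would invoke \eqref{0} rather than reprove it. A secondary technical care point is justifying that the weighted Riemann sums $\sum (v-s_k)^{\alpha-1}X(\varTheta_k,\cdot)(t_k-t_{k-1})$ converge in mean-square and that the change of variables $s\mapsto u+v-s$ is legitimate for the mean-square integral; both follow from mean-square continuity of $X$ and boundedness of the kernel away from the endpoint (the integrable singularity at $s=v$ when $\alpha<1$ is handled exactly as in the deterministic case, since $E[X(t)^2]<\infty$ uniformly on compacts). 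Everything else is a routine rearrangement.
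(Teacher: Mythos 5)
Your argument is correct, but for the left-hand inequality it takes a genuinely different route from the paper. The paper never proves \eqref{E3} directly: it recovers it as the special case $\lambda=\alpha$, $\sigma(0)=1$, $\omega=0$ of Theorem~\ref{t1}, and the proof of Theorem~\ref{t1} obtains the left-hand estimate by taking a supporting process $A(\cdot)\bigl(t-\frac{u+v}{2}\bigr)+X\bigl(\frac{u+v}{2},\cdot\bigr)$ at the midpoint (Proposition~\ref{p}), multiplying by the symmetric kernel $(v-t)^{\alpha-1}+(t-u)^{\alpha-1}$, and checking that the term involving $A(\cdot)$ integrates to zero. You instead use the reflection $s\mapsto u+v-s$ together with the midpoint bound $X\bigl(\frac{u+v}{2},\cdot\bigr)\le\frac12\bigl(X(s,\cdot)+X(u+v-s,\cdot)\bigr)$, which is literally the Jensen-convexity hypothesis; after the change of variables your single weight $(v-s)^{\alpha-1}$ produces the same combined kernel the paper integrates against, so the computations agree. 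What your route buys is that the left inequality needs only Jensen-convexity --- no supporting process and no upgrade to full convexity; what the paper's route buys is reusability, since the same supporting-process computation carries over verbatim to strongly convex processes in Theorem~\ref{t2}. For the right-hand inequality the two arguments coincide: the secant bound does require genuine convexity, obtained from Jensen-convexity plus continuity in probability via Nikodem's theorem. One small imprecision in your closing remarks: the result to invoke for that upgrade is Nikodem's regularity theorem (as the paper does at the start of the proof of Theorem~\ref{t1}), not inequality \eqref{0} itself, which is a consequence of the upgrade rather than its source. Your attention to the convergence of the weighted mean-square Riemann sums and to the integrable singularity of the kernel at the endpoint is well placed; the paper passes over both points in silence.
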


For more information and recent developments on Hermite--Hadamard type
inequalities for stochastic process, please refer to \cite{barraez,budak,gonzalez,maden,materano,
Nikodem,
sarikaya,set,set2,tomar,tomar2}.

\section{Main results}

In tis section, we introduce the concept of the generalized mean-square
fractional integrals $\mathcal{J}_{\rho,\lambda,u+;\omega}^{\sigma
}$ and
$\mathcal{J}_{\rho,\lambda,v-;\omega}^{\sigma}$ of the stochastic
process $X$.

In \cite{raina}, Raina studied a class of functions defined formally by
\begin{equation}
\mathcal{F}_{\rho,\lambda}^{\sigma} ( x ) =\mathcal{F}_{\rho
,\lambda}^{\sigma ( 0 ) ,\sigma ( 1 ) ,\ldots}
( x ) =\sum_{k=0}^{\infty}\frac{\sigma ( k ) }{\varGamma
 (
\rho k+\lambda ) }x^{k}
\text{ \ \ \ } \bigl( \rho,\lambda >0; \llvert x \rrvert <\mathcal{R} \bigr) ,
\label{E4}
\end{equation}
where the cofficents $\sigma ( k ) $ $ ( k\in\mathbb
{N}_{0}=%
\mathbb{N\cup} \{ 0 \}  ) $ make a bounded sequence of
positive real numbers and \ $\mathcal{R}$\ is the set of real numbers. For
more information on the function (\ref{E4}), please refer to \cite{lua,
parmar}. With the help of (\ref{E4}), we give the following definition.

\begin{defin}
Let $X:I\times\varOmega\rightarrow\mathbb{R}$ be a stochastic process. The
generalized mean-square fractional integrals $\mathcal{J}_{\rho,\lambda
,a+;\omega}^{\alpha}$ and $\mathcal{J}_{\rho,\lambda,b-;\omega
}^{\alpha
}$ of $X$ are defined by
\begin{equation}
\mathcal{J}_{\rho,\lambda,u+;\omega}^{\sigma} [ X ] (x)=\int_{u}^{x}
( x-t ) ^{\lambda-1}\mathcal{F}_{\rho
,\lambda
}^{\sigma} \bigl[ \omega (
x-t ) ^{\rho} \bigr] X(t,\cdot )dt,\ \ \text{ (a.e.)}\ \ x>u,
\label{E5}
\end{equation}
and
\begin{equation}
\mathcal{J}_{\rho,\lambda,v-;\omega}^{\sigma} [ X ] (x)=\int_{x}^{v}
( t-x ) ^{\lambda-1}\mathcal{F}_{\rho
,\lambda
}^{\sigma} \bigl[ \omega (
t-x ) ^{\rho} \bigr] X(s,\cdot )dt,\ \ \ \text{(a.e.)}\ \ x<v,
\label{E6}
\end{equation}
where $\lambda,\rho>0,\omega\in\mathbb{R}$.
\end{defin}

Many useful generalized mean-square fractional integrals can be
obtained by
specializing the coefficients $\sigma(k)$. Here, we just point out that the
stochastic mean-square fractional integrals $I_{a+}^{\alpha}$ and $%
I_{b+}^{\alpha}$ can be established by coosing $\lambda=\alpha$,
$\sigma
(0)=1$ and $w=0$.

Now we present Hermite--Hadamard inequality for generalized mean-square
fractional integrals $\mathcal{J}_{\rho,\lambda,a+;\omega}^{\sigma
}$ and
$\mathcal{J}_{\rho,\lambda,b-;\omega}^{\sigma}$ of $X$.

\begin{thm}
\label{t1} Let $X:I\times\varOmega\rightarrow\mathbb{R}$ be a Jensen-convex
stochastic process that is mean-square continuous in the interval $I$. For
every $u,v\in I,\ u<v$, we have the following Hermite--Hadamard
inequality
\begin{align}
& X \biggl( \frac{u+v}{2},\cdot \biggr) \label{h0}
\\
&\quad\leq \frac{1}{2 ( v-u )
^{\lambda}\mathcal{F}_{\rho,\lambda+1}^{\sigma}
 [ \omega (
v-u ) ^{\rho} ] } \bigl[ \mathcal{J}_{\rho,\lambda,u+;\omega
}^{\sigma} [ X ]
(t)+\mathcal{J}_{\rho,\lambda,v-;\omega
}^{\sigma} [ X ] (t) \bigr] \notag
\\&\quad\leq\frac{X ( u,\cdot ) +X ( v,\cdot ) }{2}.\text { \ \ \ \ \ \ \ \ \ \ \ \ \ \ \ \ \ \ \ \ \ \ \ \
\ \ \ \ \ \ \ \ \ \ \ \ \ \ \ \ \ \ \ \ \ \ \ \ \ \ \ a.e.} \notag
\end{align}
\end{thm}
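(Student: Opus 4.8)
The plan is to rewrite the sum of the two generalized fractional integrals as a single weighted integral over $[0,1]$ against a nonnegative deterministic kernel built from the Raina function, and then to bound the integrand from below by Jensen-convexity and from above by convexity. This is the stochastic-integral analogue of the classical proof of the fractional Hermite--Hadamard inequality.

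First I would apply the linear substitutions $t=su+(1-s)v$ in $\mathcal{J}_{\rho,\lambda,u+;\omega}^{\sigma}[X](v)$ and $t=(1-s)u+sv$ in $\mathcal{J}_{\rho,\lambda,v-;\omega}^{\sigma}[X](u)$. Since then $v-t=s(v-u)$ (resp. $t-u=s(v-u)$), expanding $\mathcal{F}_{\rho,\lambda}^{\sigma}$ via \eqref{E4} and interchanging the (termwise nonnegative, when $\omega\ge0$) series with the mean-square integral gives
\begin{align*}
&\mathcal{J}_{\rho,\lambda,u+;\omega}^{\sigma}[X](v)+\mathcal{J}_{\rho,\lambda,v-;\omega}^{\sigma}[X](u)\\
&\quad=(v-u)^{\lambda}\int_0^1 s^{\lambda-1}\mathcal{F}_{\rho,\lambda}^{\sigma}\bigl[\omega s^{\rho}(v-u)^{\rho}\bigr]\bigl[X(su+(1-s)v,\cdot)+X((1-s)u+sv,\cdot)\bigr]\,ds.
\end{align*}
A termwise computation, using $\int_0^1 s^{\rho k+\lambda-1}\,ds=1/(\rho k+\lambda)$ and $(\rho k+\lambda)\varGamma(\rho k+\lambda)=\varGamma(\rho k+\lambda+1)$, shows that the kernel integrates to the normalizing constant,
\[
\int_0^1 s^{\lambda-1}\mathcal{F}_{\rho,\lambda}^{\sigma}\bigl[\omega s^{\rho}(v-u)^{\rho}\bigr]\,ds=\mathcal{F}_{\rho,\lambda+1}^{\sigma}\bigl[\omega(v-u)^{\rho}\bigr],
\]
which is strictly positive (its $k=0$ term equals $\sigma(0)/\varGamma(\lambda+1)>0$), so the quotient in \eqref{h0} is well defined.

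Next, I would set $\phi(s,\cdot):=X(su+(1-s)v,\cdot)+X((1-s)u+sv,\cdot)$ and observe that the two points $su+(1-s)v$ and $(1-s)u+sv$ have midpoint $(u+v)/2$. Jensen-convexity of $X$ gives $\phi(s,\cdot)\ge 2\,X\bigl(\tfrac{u+v}{2},\cdot\bigr)$ a.e., while convexity of $X$ in the sense of \eqref{H1} — valid here because a mean-square continuous Jensen-convex process satisfies \eqref{H1} for all $\lambda\in[0,1]$, the same regularity upgrade underlying Theorem~\ref{LL1} — gives $\phi(s,\cdot)\le X(u,\cdot)+X(v,\cdot)$ a.e., for every $s\in[0,1]$. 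Multiplying these two bounds by the nonnegative kernel $s^{\lambda-1}\mathcal{F}_{\rho,\lambda}^{\sigma}[\omega s^{\rho}(v-u)^{\rho}]$, integrating over $[0,1]$, and invoking the monotonicity of the mean-square integral yields
\begin{align*}
2\,\mathcal{F}_{\rho,\lambda+1}^{\sigma}\bigl[\omega(v-u)^{\rho}\bigr]X\biggl(\frac{u+v}{2},\cdot\biggr)&\le\frac{\mathcal{J}_{\rho,\lambda,u+;\omega}^{\sigma}[X](v)+\mathcal{J}_{\rho,\lambda,v-;\omega}^{\sigma}[X](u)}{(v-u)^{\lambda}}\\
&\le\mathcal{F}_{\rho,\lambda+1}^{\sigma}\bigl[\omega(v-u)^{\rho}\bigr]\bigl(X(u,\cdot)+X(v,\cdot)\bigr)
\end{align*}
a.e., and dividing through by $2(v-u)^{\lambda}\mathcal{F}_{\rho,\lambda+1}^{\sigma}[\omega(v-u)^{\rho}]$ gives exactly \eqref{h0}.

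The substitution, the termwise integration of the Raina series (which needs the positivity of the $\sigma(k)$ and $\omega\ge0$, so that the kernel is nonnegative and the sum--integral interchange is legitimate by monotone convergence), and the existence of the mean-square integrals under mean-square continuity are all routine. The one point requiring genuine care is the passage from Jensen-convexity to full convexity used for the right-hand estimate; I would dispatch it exactly as in Kotrys' proof of Theorem~\ref{LL1}, i.e. by the fact that mean-square continuity forces a $\tfrac12$-convex stochastic process to satisfy \eqref{H1} for all $\lambda\in[0,1]$. Everything else is bookkeeping with $\mathcal{F}_{\rho,\lambda}^{\sigma}$.
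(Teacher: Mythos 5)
Your proposal is correct, but it reaches \eqref{h0} by a genuinely different route from the paper. For the left-hand inequality the paper does not pair up symmetric points: it invokes Proposition~\ref{p} to produce a supporting process $A(\cdot)(t-\tfrac{u+v}{2})+X(\tfrac{u+v}{2},\cdot)$ at the midpoint (which requires first upgrading Jensen-convexity to convexity via Nikodem's theorem), multiplies the support inequality by the two-sided kernel $(v-t)^{\lambda-1}\mathcal{F}_{\rho,\lambda}^{\sigma}[\omega(v-t)^{\rho}]+(t-u)^{\lambda-1}\mathcal{F}_{\rho,\lambda}^{\sigma}[\omega(t-u)^{\rho}]$ on $[u,v]$, and checks through the moment identities \eqref{h2}--\eqref{h3} that the linear term integrates to zero. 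Your symmetrized substitution $s\mapsto su+(1-s)v$, $s\mapsto(1-s)u+sv$ replaces all of this by the single normalization $\int_0^1 s^{\lambda-1}\mathcal{F}_{\rho,\lambda}^{\sigma}[\omega s^{\rho}(v-u)^{\rho}]\,ds=\mathcal{F}_{\rho,\lambda+1}^{\sigma}[\omega(v-u)^{\rho}]$ and gets the lower bound from Jensen-convexity alone, since the paired points always have midpoint $\tfrac{u+v}{2}$; the regularity upgrade to full convexity is then needed only for the right-hand inequality, where your argument and the paper's coincide in substance. What your version buys is economy (no supporting process, no separate evaluation of the first-moment integrals) and a slightly weaker use of the hypotheses on the left; what the paper's version buys is that it never leaves the interval $[u,v]$ and exhibits the support-line mechanism that it then reuses verbatim for Theorem~\ref{t2}. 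One point in your favour worth keeping: you explicitly flag that the kernel must be nonnegative (hence implicitly $\omega\geq 0$, given that the $\sigma(k)$ are positive) for the multiplication step and for the positivity of the normalizing constant; the paper multiplies inequality \eqref{h1} by the same kernel without comment, so this caveat applies equally to both arguments and is not a defect of yours.
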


\begin{proof}
Since the process $X$ is mean-square continuous, it is continuous in
probability. Nikodem \cite{Nikodem} proved that every Jensen-convex and continuous
in probability stochastic process is convex. Since $X$ is convex,
then by Proposition 1, it has a supporting process at any point
$t_{0}\in
intI$. Let us take a support at $t_{0}=\frac{u+v}{2}$, then we have
\begin{equation}
X ( t,\cdot ) \geq A(\cdot) \biggl( t-\frac{u+v}{2} \biggr) +X \biggl(
\frac{u+v}{2},\cdot \biggr) .\text{ \ \ \ a.e.} \label{h1}
\end{equation}
Multiplying both sides of (\ref{h1}) by $ [  ( v-t )
^{\lambda
-1}\mathcal{F}_{\rho,\lambda}^{\sigma} [ \omega ( v-t )
^{\rho} ] + ( t-u ) ^{\lambda-1}\mathcal{F}_{\rho
,\lambda
}^{\sigma} [ \omega ( t-u ) ^{\rho} ]  ] $, then
integrating the resulting inequality with respect to $t$ over $[u,v]$, we
obtain
\begin{align}
&\int\limits
_{u}^{v} \bigl[ ( v-t ) ^{\lambda-1}
\mathcal{F}%
_{\rho,\lambda}^{\sigma} \bigl[ \omega ( v-t )
^{\rho
} \bigr] + ( t-u ) ^{\lambda-1}\mathcal{F}_{\rho,\lambda}^{\sigma
}
\bigl[ \omega ( t-u ) ^{\rho} \bigr] \bigr] X ( t,\cdot ) dt
\label{3}
\\
&\quad\geq A(\cdot)\int\limits
_{u}^{v} \bigl[ ( v-t ) ^{\lambda
-1}%
\mathcal{F}_{\rho,\lambda}^{\sigma} \bigl[ \omega ( v-t ) ^{\rho
}
\bigr] \notag\\
&\qquad+ ( t-u ) ^{\lambda-1}\mathcal{F}_{\rho,\lambda
}^{\sigma} \bigl[
\omega ( t-u ) ^{\rho} \bigr] \bigr] \biggl( t-%
\frac{u+v}{2} \biggr) dt \notag
\\
&\qquad+X \biggl( \frac{u+v}{2},\cdot \biggr) \int\limits
_{u}^{v} \bigl[
( v-t ) ^{\lambda-1}\mathcal{F}_{\rho,\lambda}^{\sigma} \bigl[ \omega (
v-t ) ^{\rho} \bigr] \notag\\
&\qquad+ ( t-u ) ^{\lambda
-1}\mathcal{F}%
_{\rho,\lambda}^{\sigma}
\bigl[ \omega ( t-u ) ^{\rho
} \bigr] %
 \bigr] dt \notag\\
&\quad=A(\cdot)\int\limits
_{u}^{v} \bigl[ t ( v-t ) ^{\lambda-1}%
\mathcal{F}_{\rho,\lambda}^{\sigma} \bigl[ \omega ( v-t ) ^{\rho
}
\bigr] +t ( t-u ) ^{\lambda-1}\mathcal{F}_{\rho,\lambda
}^{\sigma} \bigl[
\omega ( t-u ) ^{\rho} \bigr] \bigr] dt \notag
\\
&\qquad-A(\cdot)\frac{u+v}{2}\int\limits
_{u}^{v} \bigl[ ( v-t )
^{\lambda-1}\mathcal{F}_{\rho,\lambda}^{\sigma} \bigl[ \omega ( v-t )
^{\rho} \bigr] + ( t-u ) ^{\lambda-1}\mathcal {F}_{\rho
,\lambda}^{\sigma}
\bigl[ \omega ( t-u ) ^{\rho} \bigr] \bigr] dt \notag
\\
&\qquad+X \biggl( \frac{u+v}{2},\cdot \biggr) \int\limits
_{u}^{v} \bigl[
( v-t ) ^{\lambda-1}\mathcal{F}_{\rho,\lambda}^{\sigma} \bigl[ \omega (
v-t ) ^{\rho} \bigr]\notag\\
&\qquad + ( t-u ) ^{\lambda
-1}\mathcal{F}%
_{\rho,\lambda}^{\sigma}
\bigl[ \omega ( t-u ) ^{\rho
} \bigr] %
 \bigr] dt. \notag
\end{align}

Calculating the integrals, we have
\begin{align}
&\int\limits
_{u}^{v}t ( v-t ) ^{\lambda-1}
\mathcal{F}_{\rho
,\lambda}^{\sigma} \bigl[ \omega ( v-t ) ^{\rho}
\bigr] dt \label{h2}
\\
&\quad=-\int\limits
_{u}^{v} ( v-t ) ^{\lambda}
\mathcal{F}_{\rho
,\lambda}^{\sigma} \bigl[ \omega ( v-t ) ^{\rho}
\bigr] dt+v\int \limits
_{u}^{v} ( v-t ) ^{\lambda-1}
\mathcal{F}_{\rho,\lambda
}^{\sigma} \bigl[ \omega ( v-t ) ^{\rho}
\bigr] dt \notag
\\
&\quad=- ( v-u ) ^{\lambda+1}\mathcal{F}_{\rho,\lambda}^{\sigma
_{1}}%
 \bigl[ \omega ( v-u ) ^{\rho} \bigr] +v ( v-u ) ^{\lambda}
\mathcal{F}_{\rho,\lambda+1}^{\sigma} \bigl[ \omega ( v-u ) ^{\rho}
\bigr] \notag
\end{align}
and similarly,
\begin{align}
&\int\limits
_{u}^{v}t ( t-u ) ^{\lambda-1}
\mathcal{F}_{\rho
,\lambda}^{\sigma} \bigl[ \omega ( t-u ) ^{\rho}
\bigr] dt \label{h3}
\\
&\quad=\int\limits
_{u}^{v} ( t-u ) ^{\lambda}
\mathcal{F}_{\rho
,\lambda}^{\sigma} \bigl[ \omega ( t-u ) ^{\rho}
\bigr] dt+u\int \limits
_{u}^{v} ( t-u ) ^{\lambda-1}
\mathcal{F}_{\rho,\lambda
}^{\sigma} \bigl[ \omega ( t-u ) ^{\rho}
\bigr] dt \notag
\\
&\quad= ( v-u ) ^{\lambda+1}\mathcal{F}_{\rho,\lambda}^{\sigma
_{1}}%
 \bigl[ \omega ( v-u ) ^{\rho} \bigr] +u ( v-u ) ^{\lambda}
\mathcal{F}_{\rho,\lambda+1}^{\sigma} \bigl[ \omega ( v-u ) ^{\rho}
\bigr] \notag
\end{align}
where $\sigma_{1}(k)=\frac{\sigma(k)}{\rho k+\lambda+1}$, $k=0,1,2,\ldots $.
Using the identities (\ref{h2}) and (\ref{h3}) in (\ref{3}), we obtain
\begin{align*}
&\mathcal{J}_{\rho,\lambda,u+;\omega}^{\sigma} [ X ] (t)+%
\mathcal{J}_{\rho,\lambda,v-;\omega}^{\sigma} [ X ] (t)
\\
&\quad\geq A(\cdot) ( u+v ) ( v-u ) ^{\lambda}\mathcal {F}%
_{\rho,\lambda+1}^{\sigma}
\bigl[ \omega ( v-u ) ^{\rho
} \bigr] \\
&\qquad-A(\cdot)\frac{u+v}{2}2 ( v-u )
^{\lambda}\mathcal{F}_{\rho
,\lambda+1}^{\sigma} \bigl[ \omega ( v-u )
^{\rho} \bigr]
\\
&\qquad+X \biggl( \frac{u+v}{2},\cdot \biggr) 2 ( v-u ) ^{\lambda}%
\mathcal{F}_{\rho,\lambda+1}^{\sigma} \bigl[ \omega ( v-u ) ^{\rho}
\bigr]
\\
&\quad=X \biggl( \frac{u+v}{2},\cdot \biggr) 2 ( v-u ) ^{\lambda}%
\mathcal{F}_{\rho,\lambda+1}^{\sigma} \bigl[ \omega ( v-u ) ^{\rho}
\bigr] .
\end{align*}
That is,
\begin{align*}
&X \biggl( \frac{u+v}{2},\cdot \biggr) \\
&\quad\leq\frac{1}{2 ( v-u )
^{\lambda}\mathcal{F}_{\rho,\lambda+1}^{\sigma} [ \omega (
v-u ) ^{\rho} ] } \bigl[
\mathcal{J}_{\rho,\lambda,u+;\omega
}^{\sigma} [ X ] (t)+\mathcal{J}_{\rho,\lambda,v-;\omega
}^{\sigma}
[ X ] (t) \bigr] \text{ \ a.e., }
\end{align*}
which completes the proof of the first inequality in (\ref{h0}).

By using the convexity of $X$, we get
\begin{align*}
X(t,\cdot) &=X \biggl( \frac{v-t}{v-u}u+\frac{t-u}{v-u}v,\cdot \biggr)
\leq \frac{v-t}{v-u}X(u,\cdot)+\frac{t-u}{v-u}X(v,\cdot)
\\
&=\frac{X(v,\cdot)-X(u,\cdot)}{v-u}t+\frac{X(u,\cdot)v-X(v,\cdot
)u}{v-u}%
\text{ \ \ a.e.}
\end{align*}
for $t\in [ u,v ] $. Using the identities (\ref{h2}) and (\ref
{h3}%
), it follows that
\begin{align*}
&\int\limits
_{u}^{v} \bigl[ ( v-t ) ^{\lambda-1}
\mathcal{F}%
_{\rho,\lambda}^{\sigma} \bigl[ \omega ( v-t )
^{\rho
} \bigr] + ( t-u ) ^{\lambda-1}\mathcal{F}_{\rho,\lambda}^{\sigma
}
\bigl[ \omega ( t-u ) ^{\rho} \bigr] \bigr] X ( t,\cdot ) dt
\\
&\quad\leq\frac{X(v,\cdot)-X(u,\cdot)}{v-u}
\\
&\qquad\times\int\limits
_{u}^{v} \bigl[ t ( v-t ) ^{\lambda-1}
\mathcal{F}_{\rho,\lambda}^{\sigma} \bigl[ \omega ( v-t ) ^{\rho}
\bigr] +t ( t-u ) ^{\lambda
-1}\mathcal{F%
}_{\rho,\lambda}^{\sigma}
\bigl[ \omega ( t-u ) ^{\rho
} \bigr] %
 \bigr] dt
\\
&\qquad+\frac{X(u,\cdot)v-X(v,\cdot)u}{v-u}
\\
&\qquad\times\int\limits
_{u}^{v} \bigl[ ( v-t ) ^{\lambda-1}
\mathcal{F}_{\rho,\lambda}^{\sigma} \bigl[ \omega ( v-t ) ^{\rho}
\bigr] + ( t-u ) ^{\lambda
-1}\mathcal{F}%
_{\rho,\lambda}^{\sigma}
\bigl[ \omega ( t-u ) ^{\rho
} \bigr] %
 \bigr] dt
\\
&\quad=\frac{X(v,\cdot)-X(u,\cdot)}{v-u} ( u+v ) ( v-u ) ^{\lambda}\mathcal{F}_{\rho,\lambda+1}^{\sigma}
\bigl[ \omega ( v-u ) ^{\rho} \bigr]
\\
&\qquad+\frac{X(u,\cdot)v-X(v,\cdot)u}{v-u}2 ( v-u ) ^{\lambda}%
\mathcal{F}_{\rho,\lambda+1}^{\sigma}
\bigl[ \omega ( v-u ) ^{\rho} \bigr]
\\
&\quad= \bigl[ X(u,\cdot)+X(v,\cdot) \bigr] ( v-u ) ^{\lambda}%
\mathcal{F}_{\rho,\lambda+1}^{\sigma} \bigl[ \omega ( v-u ) ^{\rho}
\bigr] .
\end{align*}
That is,
\begin{align*}
&\frac{1}{2 ( v-u ) ^{\lambda}\mathcal{F}_{\rho,\lambda
+1}^{\sigma} [ \omega ( v-u ) ^{\rho} ] } \bigl[ \mathcal{J}_{\rho,\lambda,u+;\omega}^{\sigma} [ X ]
(t)+%
\mathcal{J}_{\rho,\lambda,v-;\omega}^{\sigma} [ X ] (t) \bigr]
\\
&\quad \leq\frac{X(u,\cdot)+X(v,\cdot)}{2}\text{ \ \ a.e., }
\end{align*}
which completes the proof.
\end{proof}

\begin{remark}
i) Choosing $\lambda=\alpha$, $\sigma(0)=1$ and $w=0$ in Theorem \ref
{t1}, the inequality (\ref{h0}) reduces to the inequality (\ref{E3}).

ii) Choosing $\lambda=1$, $\sigma(0)=1$ and $w=0$ in Theorem \ref
{t1}, the
inequality (\ref{h0}) reduces to the inequality (\ref{0}).
\end{remark}

\begin{thm}
\label{t2} Let $X:I\times\varOmega\rightarrow\mathbb{R}$ be a stochastic
process, which is strongly Jensen-convex with modulus $C(\cdot)$ and
mean-square continuous in the interval $I$ so that $E[C^{2}]<\infty$. Then
for any $u,v\in I$, we have
\begin{align*}
&X \biggl( \frac{u+v}{2},\cdot \biggr)
\\
&\quad -C(\cdot) \biggl\{ 2 ( v-u ) ^{\lambda+2}\mathcal{F}_{\rho,\lambda}^{\sigma_{2}}
\bigl[ \omega ( v-u ) ^{\rho} \bigr] -2 ( v-u ) ^{\lambda}\mathcal
{F}_{\rho
,\lambda}^{\sigma_{1}} \bigl[ \omega ( v-u ) ^{\rho} \bigr]
\\
&\quad + \bigl( u^{2}+v^{2} \bigr) ( v-u )
^{\lambda
}\mathcal{F}%
_{\rho,\lambda+1}^{\sigma} \bigl[ \omega
( v-u ) ^{\rho
} \bigr] - \biggl( \frac{u+v}{2} \biggr) ^{2}
\biggr\}
\\
&\qquad\leq\frac{1}{2 ( v-u ) ^{\lambda}\mathcal{F}_{\rho,\lambda
+1}^{\sigma} [ \omega ( v-u ) ^{\rho} ] } \bigl[ \mathcal{J}_{\rho,\lambda,u+;\omega}^{\sigma} [ X ]
(t)+%
\mathcal{J}_{\rho,\lambda,v-;\omega}^{\sigma} [ X ] (t) \bigr]
\\
&\qquad\leq\frac{X ( u,\cdot ) +X ( v,\cdot )
}{2}-C(\cdot ) \biggl\{ \frac{u^{2}+v^{2}}{2}+2 ( v-u )
^{\lambda+2}\mathcal {F}%
_{\rho,\lambda}^{\sigma_{2}} \bigl[
\omega ( v-u ) ^{\rho}%
 \bigr]
\\
&\quad\qquad -2 ( v\,{-}\,u ) ^{\lambda}\mathcal{F}_{\rho,\lambda
}^{\sigma
_{1}}
\bigl[ \omega ( v\,{-}\,u ) ^{\rho} \bigr] \,{+} \bigl( u^{2}\,{+}\,v^{2}
\bigr) ( v\,{-}\,u ) ^{\lambda}\mathcal{F}_{\rho
,\lambda
\,{+}\,1}^{\sigma} \bigl[
\omega ( v\,{-}\,u ) ^{\rho} \bigr] \biggr\} \ 
\text{a.e.}
\end{align*}
\end{thm}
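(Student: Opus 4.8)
The plan is to follow the proof of Theorem~\ref{t1} almost step for step, replacing the supporting line of a convex process by the supporting parabola of a strongly convex one and dragging one extra moment integral through the computation. Mean-square continuity gives continuity in probability, so the strongly Jensen-convex process $X$ is strongly convex with modulus $C(\cdot)$, and by the supporting-parabola property of strongly convex processes (Kotrys~\cite{kotrys2}) there is at the interior point $t_{0}=\frac{u+v}{2}$ a random variable $A(\cdot)$ with
\begin{equation*}
X(t,\cdot)\ \geq\ A(\cdot)\biggl(t-\frac{u+v}{2}\biggr)+X\biggl(\frac{u+v}{2},\cdot\biggr)+C(\cdot)\biggl(t-\frac{u+v}{2}\biggr)^{2}\qquad(\text{a.e.}),\quad t\in I.
\end{equation*}

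For the first inequality I would multiply this by the kernel
\begin{equation*}
K(t)=(v-t)^{\lambda-1}\mathcal{F}_{\rho,\lambda}^{\sigma}\bigl[\omega(v-t)^{\rho}\bigr]+(t-u)^{\lambda-1}\mathcal{F}_{\rho,\lambda}^{\sigma}\bigl[\omega(t-u)^{\rho}\bigr]
\end{equation*}
and integrate over $[u,v]$, exactly as in the proof of Theorem~\ref{t1}, using monotonicity of the mean-square integral. The left side becomes $\mathcal{J}_{\rho,\lambda,u+;\omega}^{\sigma}[X](t)+\mathcal{J}_{\rho,\lambda,v-;\omega}^{\sigma}[X](t)$; on the right the $A(\cdot)$-term vanishes by the symmetry of $K$ about $\frac{u+v}{2}$ (the same cancellation already used in Theorem~\ref{t1}), the $X(\frac{u+v}{2},\cdot)$-term contributes $X(\frac{u+v}{2},\cdot)\,2(v-u)^{\lambda}\mathcal{F}_{\rho,\lambda+1}^{\sigma}[\omega(v-u)^{\rho}]$ because $\int_{u}^{v}K(t)\,dt=2(v-u)^{\lambda}\mathcal{F}_{\rho,\lambda+1}^{\sigma}[\omega(v-u)^{\rho}]$, and the $C(\cdot)$-term leaves $\int_{u}^{v}(t-\frac{u+v}{2})^{2}K(t)\,dt$ to be evaluated.

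For the second inequality I would use strong convexity directly: for $t\in[u,v]$, writing $t=\frac{v-t}{v-u}u+\frac{t-u}{v-u}v$ with convex weights,
\begin{equation*}
X(t,\cdot)\ \leq\ \frac{v-t}{v-u}X(u,\cdot)+\frac{t-u}{v-u}X(v,\cdot)-C(\cdot)\,(v-t)(t-u)\qquad(\text{a.e.}),
\end{equation*}
since $\frac{v-t}{v-u}\cdot\frac{t-u}{v-u}(u-v)^{2}=(v-t)(t-u)$. Multiplying by $K(t)$ and integrating, the linear-in-$X$ part is verbatim the integral from Theorem~\ref{t1} and equals $[X(u,\cdot)+X(v,\cdot)](v-u)^{\lambda}\mathcal{F}_{\rho,\lambda+1}^{\sigma}[\omega(v-u)^{\rho}]$, and the extra term is $-C(\cdot)\int_{u}^{v}(v-t)(t-u)K(t)\,dt$. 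Both new integrals $\int(t-\frac{u+v}{2})^{2}K$ and $\int(v-t)(t-u)K$ are, after expanding the quadratic in $t$, linear combinations of $\int_{u}^{v}t^{2}K(t)\,dt$, $\int_{u}^{v}tK(t)\,dt$, $\int_{u}^{v}K(t)\,dt$; the last two are already in (\ref{h2})--(\ref{h3}), and $\int_{u}^{v}t^{2}K(t)\,dt$ is obtained by the same trick as there --- write $t=v-(v-t)$, resp.\ $t=u+(t-u)$, expand $t^{2}$, and integrate term by term --- using $\int_{0}^{v-u}s^{\lambda-1}\mathcal{F}_{\rho,\lambda}^{\sigma}[\omega s^{\rho}]\,ds=(v-u)^{\lambda}\mathcal{F}_{\rho,\lambda+1}^{\sigma}$, $\int_{0}^{v-u}s^{\lambda}\mathcal{F}_{\rho,\lambda}^{\sigma}[\omega s^{\rho}]\,ds=(v-u)^{\lambda+1}\mathcal{F}_{\rho,\lambda}^{\sigma_{1}}$ and $\int_{0}^{v-u}s^{\lambda+1}\mathcal{F}_{\rho,\lambda}^{\sigma}[\omega s^{\rho}]\,ds=(v-u)^{\lambda+2}\mathcal{F}_{\rho,\lambda}^{\sigma_{2}}$, where $\sigma_{1}(k)=\frac{\sigma(k)}{\rho k+\lambda+1}$ and $\sigma_{2}$ is the next coefficient defined in the same way. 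Collecting the pieces and dividing both chains by $2(v-u)^{\lambda}\mathcal{F}_{\rho,\lambda+1}^{\sigma}[\omega(v-u)^{\rho}]$ gives the stated bounds.

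All conceptual steps coincide with those of Theorem~\ref{t1}; the only genuinely new ingredients are the supporting parabola and the second moment $\int_{u}^{v}t^{2}K(t)\,dt$, which is precisely what introduces the coefficient sequence $\sigma_{2}$ into the final formulas. The hypothesis $E[C^{2}]<\infty$, together with $E[X(t)^{2}]<\infty$, is what makes the $C(\cdot)$-terms legitimate random variables, keeps all mean-square integrals well defined, and justifies the termwise integration of the (locally uniformly convergent) series $\mathcal{F}_{\rho,\lambda}^{\sigma}$. I therefore expect the only real obstacle to be the bookkeeping: keeping straight the factors of $2$ coming from the two halves of $K$, the signs, and the combination of the $\sigma$-, $\sigma_{1}$- and $\sigma_{2}$-terms (and of the $u^{2}+v^{2}$ and $(\frac{u+v}{2})^{2}$ pieces) so that the supporting-parabola estimate and the chord estimate collapse exactly onto the two sides of the asserted inequality.
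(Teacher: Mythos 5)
Your argument is correct, and it reaches the same bounds as the paper, but by a genuinely different route. The paper does not touch the supporting-parabola or the strong-convexity chord inequality at all: it invokes Lemma 2 of \cite{kotrys2} to say that $Y(t,\cdot)=X(t,\cdot)-C(\cdot)t^{2}$ is convex, applies Theorem \ref{t1} to $Y$ as a black box, and then unpacks $\mathcal{J}_{\rho,\lambda,u+;\omega}^{\sigma}[Y]=\mathcal{J}_{\rho,\lambda,u+;\omega}^{\sigma}[X]-C(\cdot)\int_{u}^{v}t^{2}(v-t)^{\lambda-1}\mathcal{F}_{\rho,\lambda}^{\sigma}[\omega(v-t)^{\rho}]\,dt$ (and its mirror), which is exactly where the second-moment integrals and the sequence $\sigma_{2}$ enter. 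Your version instead reruns the Theorem \ref{t1} argument from scratch with the supporting parabola at $t_{0}=\frac{u+v}{2}$ and the pointwise strong-convexity estimate $X(t,\cdot)\leq\frac{v-t}{v-u}X(u,\cdot)+\frac{t-u}{v-u}X(v,\cdot)-C(\cdot)(v-t)(t-u)$. The two are algebraically equivalent: since $\int_{u}^{v}tK(t)\,dt=\frac{u+v}{2}\int_{u}^{v}K(t)\,dt$, one has $\int_{u}^{v}(t-\frac{u+v}{2})^{2}K(t)\,dt=\int_{u}^{v}t^{2}K(t)\,dt-(\frac{u+v}{2})^{2}\int_{u}^{v}K(t)\,dt$ and $\int_{u}^{v}(v-t)(t-u)K(t)\,dt=\frac{u^{2}+v^{2}}{2}\int_{u}^{v}K(t)\,dt-\int_{u}^{v}t^{2}K(t)\,dt$, so your correction terms collapse onto the paper's after dividing by $2(v-u)^{\lambda}\mathcal{F}_{\rho,\lambda+1}^{\sigma}[\omega(v-u)^{\rho}]$. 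What the paper's route buys is brevity (Theorem \ref{t1} is reused wholesale); what yours buys is transparency, since the strong-convexity refinements appear directly as the manifestly nonnegative integrals $\int_{u}^{v}(t-\frac{u+v}{2})^{2}K$ and $\int_{u}^{v}(v-t)(t-u)K$, making it visible that the result sharpens Theorem \ref{t1}. The one point you should make explicit is the justification of the supporting parabola: it is indeed in \cite{kotrys2}, and in any case follows immediately by applying Proposition \ref{p} to the convex process $X-C(\cdot)t^{2}$ and regrouping, so there is no gap --- only a dependence on essentially the same lemma the paper uses.
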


\begin{proof}
It is known that if $X$ is strongly convex process with the modulus
$C(\cdot
)$, then the process $Y(t,\cdot)=X(t,\cdot)-C(\cdot)t^{2}$ is convex
\cite%
[Lemma 2]{kotrys2}. Appying the inequality (\ref{h0}) for the process $%
Y(t,.)$, we have
\begin{align*}
Y \biggl( \frac{u+v}{2},\cdot \biggr)
&\leq\frac{1}{2 ( v-u ) ^{\lambda}\mathcal{F}_{\rho,\lambda
+1}^{\sigma} [ \omega ( v-u ) ^{\rho} ] }
\int \limits
_{u}^{v} \bigl[ ( v-t ) ^{\lambda-1}
\mathcal{F}_{\rho
,\lambda}^{\sigma} \bigl[ \omega ( v-t ) ^{\rho}
\bigr] \\
&\quad+ ( t-u ) ^{\lambda-1}\mathcal{F}_{\rho,\lambda}^{\sigma} \bigl[
\omega ( t-u ) ^{\rho} \bigr] \bigr] Y ( t,\cdot ) dt
\\
&\leq\frac{Y ( u,\cdot ) +Y ( v,\cdot ) }{2}\text { \ \ \ \ \ \ \ \ \ \ \ \ \ \ \ \ \ \ \ \ \ \ \ \
\ \ \ \ \ \ \ \ \ \ \ \ \ \ \ \ \ \ \ \ \ \ \ \ \ \ \ a.e.}
\end{align*}
That is
\begin{align*}
&X \biggl( \frac{u+v}{2},\cdot \biggr) -C(\cdot) \biggl( \frac
{u+v}{2}
\biggr) ^{2}\\
&\quad\leq\frac{1}{2 ( v-u ) ^{\lambda}\mathcal{F}_{\rho,\lambda
+1}^{\sigma} [ \omega ( v-u ) ^{\rho} ] }
 \Big\{ \int\limits
_{u}^{v} \bigl[ ( v-t ) ^{\lambda-1}%
\mathcal{F}_{\rho,\lambda}^{\sigma} \bigl[ \omega ( v-t ) ^{\rho
}
\bigr] \\
&\qquad+ ( t-u ) ^{\lambda-1}\mathcal{F}_{\rho,\lambda
}^{\sigma} \bigl[
\omega ( t-u ) ^{\rho} \bigr] \bigr] X(t,.)dt
\\
&\qquad-C(\cdot)\int\limits
_{u}^{v} \bigl[ t^{2} ( v-t )
^{\lambda
-1}%
\mathcal{F}_{\rho,\lambda}^{\sigma} \bigl[ \omega
( v-t ) ^{\rho
} \bigr] +t^{2} ( t-u ) ^{\lambda-1}
\mathcal{F}_{\rho,\lambda
}^{\sigma} \bigl[ \omega ( t-u ) ^{\rho}
\bigr] \bigr] dt\Big\}\\
&\quad\leq\frac{X ( u,\cdot ) -C(\cdot)u^{2}+X ( v,\cdot
 )
-C(\cdot)v^{2}}{2}\text{ \ \ \ \ \ \ \ \ \ \ \ \ \ \ \ \ \ \ \ \ \ \ \ \
\ \ \ \ \ \ \ \ \ \ \ \ \ \ \ \ \ \ \ \ \ \ \ \ \ \ \ a.e.}
\end{align*}

Calculating the integrals, we obtain 
\begin{align*}
&\int\limits
_{u}^{v}t^{2} ( v-t ) ^{\lambda-1}
\mathcal {F}_{\rho
,\lambda}^{\sigma} \bigl[ \omega ( v-t ) ^{\rho}
\bigr] dt
\\
&\quad=\int\limits
_{u}^{v}t^{2} ( v-t ) ^{\lambda-1}
\mathcal {F}_{\rho
,\lambda}^{\sigma} \bigl[ \omega ( v-t ) ^{\rho}
\bigr] dt+\int \limits
_{u}^{v}t^{2} ( v-t ) ^{\lambda-1}
\mathcal{F}_{\rho
,\lambda}^{\sigma} \bigl[ \omega ( v-t ) ^{\rho}
\bigr] dt
\\
&\qquad+\int\limits
_{u}^{v}t^{2} ( v-t ) ^{\lambda-1}
\mathcal {F}_{\rho
,\lambda}^{\sigma} \bigl[ \omega ( v-t ) ^{\rho}
\bigr] dt
\\
&\quad= ( v-u ) ^{\lambda+2}\mathcal{F}_{\rho,\lambda}^{\sigma
_{2}}%
 \bigl[ \omega ( v-u ) ^{\rho} \bigr] -2v ( v-u ) ^{\lambda+1}
\mathcal{F}_{\rho,\lambda}^{\sigma_{1}} \bigl[ \omega ( v-u ) ^{\rho}
\bigr]
\\
&\qquad+v^{2} ( v-u ) ^{\lambda}\mathcal{F}%
_{\rho,\lambda+1}^{\sigma}
\bigl[ \omega ( v-u ) ^{\rho
} \bigr]
\end{align*}
and similarly,
\begin{align*}
&\int\limits
_{u}^{v}t^{2} ( t-u ) ^{\lambda-1}
\mathcal {F}_{\rho
,\lambda}^{\sigma} \bigl[ \omega ( t-u ) ^{\rho}
\bigr] dt
\\
&\quad= ( v-u ) ^{\lambda+2}\mathcal{F}_{\rho,\lambda}^{\sigma
_{2}}%
 \bigl[ \omega ( v-u ) ^{\rho} \bigr] +2u ( v-u ) ^{\lambda+1}
\mathcal{F}_{\rho,\lambda}^{\sigma_{1}} \bigl[ \omega ( v-u ) ^{\rho}
\bigr]
\\
&\qquad+u^{2} ( v-u ) ^{\lambda}\mathcal{F}%
_{\rho,\lambda+1}^{\sigma}
\bigl[ \omega ( v-u ) ^{\rho
} \bigr],
\end{align*}
where $\sigma_{2}(k)=\frac{\sigma(k)}{\rho k+\lambda+2}$, $k=0,1,2,\ldots $.
Then it follows that
\begin{align*}
&X \biggl( \frac{u+v}{2},\cdot \biggr) -C(\cdot) \biggl( \frac
{u+v}{2}
\biggr) ^{2}
\\
&\quad\leq\frac{1}{2 ( v-u ) ^{\lambda}\mathcal{F}_{\rho,\lambda
+1}^{\sigma} [ \omega ( v-u ) ^{\rho} ] } \bigl[ \mathcal{J}_{\rho,\lambda,a+;\omega}^{\sigma} [ X ]
(t)+%
\mathcal{J}_{\rho,\lambda,b-;\omega}^{\sigma} [ X ] (t) \bigr]
\\
&\qquad-C(\cdot) \bigl[ 2 ( v-u ) ^{\lambda+2}\mathcal{F}_{\rho
,\lambda}^{\sigma_{2}}
\bigl[ \omega ( v-u ) ^{\rho} \bigr]
\\
&\qquad -2 ( v-u ) ^{\lambda}\mathcal{F}_{\rho,\lambda
}^{\sigma
_{1}}
\bigl[ \omega ( v-u ) ^{\rho} \bigr] + \bigl( u^{2}+v^{2}
\bigr) ( v-u ) ^{\lambda}\mathcal{F}_{\rho
,\lambda
+1}^{\sigma} \bigl[
\omega ( v-u ) ^{\rho} \bigr] \bigr]
\\
&\quad\leq\frac{X ( u,\cdot ) +X ( v,\cdot )
}{2}-C(\cdot)%
\frac{u^{2}+v^{2}}{2}\text{ \ \ \ \ \
\ \ \ \ \ \ \ \ \ \ \ \ \ \ \ \ \ \ \ \ \ \ \ \ \ \ \ \ \ \ \ \ \ \ \ \ \ \ \ \ \
\ \ \ \ \ a.e.}
\end{align*}
Then
\begin{align*}
&X \biggl( \frac{u+v}{2},\cdot \biggr)
\\
&\quad-C(\cdot) \biggl\{ 2 ( v-u ) ^{\lambda+2}\mathcal{F}_{\rho
,\lambda}^{\sigma_{2}}
\bigl[ \omega ( v-u ) ^{\rho} \bigr] -2 ( v-u ) ^{\lambda}
\mathcal{F}_{\rho,\lambda}^{\sigma
_{1}}%
 \bigl[ \omega ( v-u )
^{\rho} \bigr]
\\
&\quad + \bigl( u^{2}+v^{2} \bigr) ( v-u )
^{\lambda
}\mathcal{F}%
_{\rho,\lambda+1}^{\sigma} \bigl[ \omega
( v-u ) ^{\rho
} \bigr] - \biggl( \frac{u+v}{2} \biggr) ^{2}
\biggr\}
\\
&\qquad\leq\frac{1}{2 ( v-u ) ^{\lambda}\mathcal{F}_{\rho,\lambda
+1}^{\sigma} [ \omega ( v-u ) ^{\rho} ] } \bigl[ \mathcal{J}_{\rho,\lambda,u+;\omega}^{\sigma} [ X ]
(t)+%
\mathcal{J}_{\rho,\lambda,v-;\omega}^{\sigma} [ X ] (t) \bigr]
\\
&\qquad\leq\frac{X ( u,\cdot ) +X ( v,\cdot )
}{2}-C(\cdot ) \biggl\{ \frac{u^{2}+v^{2}}{2}+2 ( v-u )
^{\lambda+2}\mathcal {F}%
_{\rho,\lambda}^{\sigma_{2}} \bigl[
\omega ( v-u ) ^{\rho}%
 \bigr]
\\
&\quad\qquad {-}\,2 ( v\,{-}\,u ) ^{\lambda}\mathcal{F}_{\rho,\lambda
}^{\sigma
_{1}}
\bigl[ \omega ( v\,{-}\,u ) ^{\rho} \bigr] \,{+}\, \bigl( u^{2}\,{+}\,v^{2}
\bigr) ( v\,{-}\,u ) ^{\lambda}\mathcal{F}_{\rho
,\lambda
+1}^{\sigma} \bigl[
\omega ( v\,{-}\,u ) ^{\rho} \bigr] \biggr\} \ \ 
\text{a.e.}
\end{align*}
This completes the proof.
\end{proof}

\begin{remark}
Choosing $\lambda=\alpha$, $\sigma(0)=1$ and $w=0$ in Theorem \ref{t2},
it reduces to Theorem 7 in \cite{agahi}.
\end{remark}

\begin{acknowledgement}[title={Acknowledgments}]
Authors thank the reviewer for his/her thorough review and highly
appreciate the comments and suggestions.
\end{acknowledgement}



\end{document}